\newcommand{\PP}{\mathbb{P}}
\newtheorem{lemma}{Lemma}
\newtheorem{theorem}{Theorem}
\title{Limiting distributions of ratios of Binomial random variables}
\author{Adriel Barretto\footnote{University of Virginia, Department of Statistics, Charlottesville, VA, USA.}\phantom{a}and Zachary Lubberts\footnotemark[1]}
\date{}
\begin{document}

\maketitle

\begin{abstract}
We consider the limiting distribution of the quantity $X^s/(X+Y)^r$, where $X$ and $Y$ are two independent Binomial random variables with a common success probability and a number of trials $n$ and $m$, respectively, and $r,s$ are positive real numbers. Under several settings, we prove that this converges to a Normal distribution with a given mean and variance, and demonstrate these theoretical results through simulations. 
\end{abstract}

\section{Introduction}
As part of the analysis in \cite{lubberts2025}, the authors consider the quantity 
$$ \frac{X}{\sqrt{X+Y}}, $$
where $X\sim \mathrm{Binomial}(n,p),$ $Y\sim\mathrm{Binomial}(m,p)$, and $X, Y$ are independent of one another. The setting of interest in that paper occurs when $m$ has order comparable to $n^2$, so that the quantity in the denominator is typically of a similar order to the one in the numerator, and in this case, it was shown that for large values of $n$, this ratio  has a limiting Normal distribution with mean $\sqrt{p}$ and variance $(1-p)/n$. While it is well-known that given the value of $X+Y$, for some random variables $X$ and $Y$ as above, the value of $X$ is a Hypergeometric random variable, the literature is less forthcoming about the distribution of such ratios of a pair of Binomial random variables. In the present work, we extend the previous results to the case where the function takes the form
$$
R =\frac{X^s}{(X+Y)^r},
$$
for $r,s>0$. We will show that under several parameter regimes, $R$ has a limiting Normal distribution. We verify these results through simulations, showing the effects of varying each of the parameters on the observed distribution. 

\section{Limiting Distribution}

We may approximate the function defining our ratio of interest using the Taylor approximation:
\begin{equation} f(x,y) = \frac{x^s}{(x+y)^r} = \frac{(np)^s}{(np+mp)^r}+\nabla f(np,mp)^T\begin{bmatrix} x-np\\ y-mp\end{bmatrix} +Q(x,y), \label{eq:approx}\end{equation}
where the quadratic remainder term takes the form 
$$Q(x,y) = \frac{1}{2}\begin{bmatrix} x-np & y-mp \end{bmatrix} \nabla^2 f(\xi, \eta) \begin{bmatrix}x-np\\y-mp\end{bmatrix}, $$
for some point $(\xi,\eta)$ on the line segment connecting $(np,mp)$ with $(x,y)$. 

If we substitute the random variables $X$ and $Y$ into this expression, the linear term in Equation~(\ref{eq:approx}) already looks quite Normal in distribution once $n$ and $m$ are large, since $X,Y$ are independent Binomial random variables. However, in order to show that the limiting distribution of $R$ looks like the linear terms in this equation, we must bound the remainder term. We will make use of the following result from \cite{chung2000course}:

\begin{lemma}
\label{lem:slutzky}
Let $X_n\rightarrow X$ in distribution, and $Y_n\rightarrow 0$ in probability. Then $X_n+Y_n\rightarrow X$ in distribution.
\end{lemma}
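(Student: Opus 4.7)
The plan is to verify convergence in distribution via the cumulative distribution function: I need to show that for every continuity point $x$ of the distribution function $F$ of $X$, we have $\PP(X_n + Y_n \leq x) \to F(x)$.

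The key tool is a sandwich inequality that separates the behavior of $X_n$ and $Y_n$. For any $\epsilon>0$, I would use the event decomposition
\begin{equation*}
\{X_n+Y_n\leq x\} \subseteq \{X_n\leq x+\epsilon\}\cup\{|Y_n|>\epsilon\},
\end{equation*}
and analogously
\begin{equation*}
\{X_n\leq x-\epsilon\} \subseteq \{X_n+Y_n\leq x\}\cup\{|Y_n|>\epsilon\},
\end{equation*}
so that
\begin{equation*}
\PP(X_n\leq x-\epsilon)-\PP(|Y_n|>\epsilon)\leq \PP(X_n+Y_n\leq x)\leq \PP(X_n\leq x+\epsilon)+\PP(|Y_n|>\epsilon).
\end{equation*}

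Next I would take $n\to\infty$. Since $Y_n\to 0$ in probability, $\PP(|Y_n|>\epsilon)\to 0$, and since $X_n\to X$ in distribution, $\PP(X_n\leq x\pm\epsilon)\to F(x\pm\epsilon)$ provided $x\pm\epsilon$ are continuity points of $F$. This gives
\begin{equation*}
F(x-\epsilon)\leq \liminf_{n\to\infty}\PP(X_n+Y_n\leq x)\leq \limsup_{n\to\infty}\PP(X_n+Y_n\leq x)\leq F(x+\epsilon).
\end{equation*}

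Finally, I would let $\epsilon\to 0$ along a sequence avoiding the at-most-countable set of discontinuities of $F$; since $x$ itself was chosen to be a continuity point of $F$, both bounds converge to $F(x)$, completing the argument. The only mild subtlety is ensuring that $x\pm\epsilon$ are themselves continuity points of $F$, but this is handled by the countability of the discontinuity set of any distribution function. Thus there is no real obstacle; the proof is a clean application of the standard continuity-point technique for weak convergence.
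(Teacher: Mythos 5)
Your proof is correct and complete. Note that the paper does not actually prove this lemma---it is simply quoted from the cited reference (Chung) as a standard Slutsky-type result---so there is no in-paper argument to compare against. Your CDF sandwich argument is the standard textbook proof: both event inclusions are valid, the resulting two-sided bound gives $F(x-\epsilon)\leq\liminf_n \PP(X_n+Y_n\leq x)\leq\limsup_n \PP(X_n+Y_n\leq x)\leq F(x+\epsilon)$, and your handling of the one genuine subtlety---choosing $\epsilon$ so that $x\pm\epsilon$ avoid the at-most-countable set of discontinuities of $F$, then using continuity of $F$ at $x$ to send $\epsilon\to 0$---is exactly right.
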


We will show that after appropriate scaling, the quadratic remainder term converges to 0 in probability, so the two linear terms determine the limiting distribution of $R$. For any quadratic form $x^TAx$, applying the Cauchy-Schwarz inequality and monotonicity property of the spectral norm, we have
$$ x^TAx = \langle Ax,x\rangle \leq \|Ax\|\|x\|\leq \|A\|\|x\|^2,$$ so in order to control $Q,$ we must find an upper bound for $\|\nabla^2 f(x,y)\|_2$. Note that a Hessian matrix must be symmetric, so its singular values are simply the absolute values of its eigenvalues. The Gerschgorin disk theorem \cite{horn2012matrix} tells us that any eigenvalue of a matrix must belong to the union of its Gerschgorin disks, so no eigenvalue of a matrix $A$ can be larger than 
$$\max_i \sum_{j=1}^n |A_{ij}|\leq \sum_{i,j=1}^n |A_{ij}|.$$ In the present case, the Hessian is given by 
$$\nabla^2 f(x,y) = \frac{x^{s-2}}{(x+y)^{r+2}}\begin{bmatrix}s(s-1)(x+y)^2-2rsx(x+y)+r(r+1)x^2& r(r+1)x^2-rsx(x+y)\\ r(r+1)x^2-rsx(x+y)& r(r+1)x^2\end{bmatrix}.$$ Utilizing the Gerschgorin bound just stated, we have
\begin{align}
\|\nabla^2 f(x,y)\|_2 &\leq \frac{x^{s-2}}{(x+y)^{r+2}}\left[|s(s-1)(x+y)^2-2rsx(x+y)+r(r+1)x^2|\right.\notag\\
&\quad \left.+2|r(r+1)x^2-rsx(x+y)|+r(r+1)x^2\right].\label{eq:hessianbound}
\end{align}

Now since $X,Y$ are Binomial random variables, we know that with overwhelming probability, $|X-np|\leq \sqrt{n\log(n)p(1-p)}$, and $|Y-mp|\leq \sqrt{m\log(m)p(1-p)}$; let us call this event $\mathcal{A}$. More precisely, the probability that $|X-np|>C\sqrt{n\log(n)}$ decays faster than $Cn^{-2}$, and similarly for $Y$. So to bound the residual term, we see that the event
$$ \{|Q(X,Y)|>\epsilon\}\subseteq \{|Q(X,Y)|>\epsilon\}\cup \{|X-np|>C\sqrt{n\log n}\}\cup \{|Y-mp|>C\sqrt{m\log m}\},$$
and thus 
\begin{align*}
\PP[|Q(X,Y)|>\epsilon]&\leq \PP[(|Q(X,Y)|>\epsilon) \cap (|X-np|\leq C\sqrt{n\log n})\cap (|Y-mp|\leq C\sqrt{m\log m})]\\
&\quad +C n^{-2} + Cm^{-2}.
\end{align*}

Consider $x=np+r_x$, where $|r_x|\leq C\sqrt{n\log(n)}$, and $y=mp+r_y$, where $|r_y|\leq C\sqrt{m\log(m)}$. Then the right hand side of inequality (\ref{eq:hessianbound}) may be bounded by
$$
\frac{Cn^{s-2}\max\{s(n+m),rn\}^2}{(n+m)^{r+2}}.$$ We also have the inequality
\begin{align*}
\left\|\begin{bmatrix}x-np\\y-mp\end{bmatrix}\right\|^2&= r_x^2+r_y^2\leq Cn\log(n)+Cm\log(m),
\end{align*}
so on the event $\mathcal{A}$, we get that
\begin{align}
|Q(X,Y)| &\leq \frac{Cn^{s-2}\max\{s(n+m),rn\}^2}{(n+m)^{r+2}}(n\log(n)+m\log(m))\notag\\
&\leq \frac{Cn^{s-2}\log(n+m)}{(n+m)^{r-1}},
\label{eq:qbound}
\end{align}
since $s,r$ are constants. The exact behavior of this quantity depends on the ratio $m/n$, but in light of Lemma~\ref{lem:slutzky}, in order to obtain the distributional convergence results, we must simply show that the final quantity in (\ref{eq:qbound}) still goes to zero after appropriate scaling, for any of the cases we wish to consider.

To find the limiting distribution of $R=f(X,Y)$, we re-arrange Equation~(\ref{eq:approx}) after evaluating at $(X,Y)$ (and neglecting the remainder term for the time being):
\begin{align*}
R - \frac{(np)^s}{((n+m)p)^r} &\approx \nabla f(np,mp)^T \begin{bmatrix}X-np\\Y-mp\end{bmatrix}\\
&= \frac{(np)^s}{((n+m)p)^{r+1}}\begin{bmatrix}\left(\frac{s(n+m)}{n}-r\right)\sqrt{np(1-p)}& -r\sqrt{mp(1-p)}\end{bmatrix}\begin{bmatrix}\frac{X-np}{\sqrt{np(1-p)}}\\ \frac{Y-mp}{\sqrt{mp(1-p)}}\end{bmatrix}.
\end{align*}
The last vector will converge to a Normal vector $(Z_X,Z_Y)$ as $n,m\rightarrow\infty$, but an appropriate scaling is required so that neither of the coefficients of $Z_X, Z_Y$ diverge as $n,m\rightarrow\infty$, and it is not the case that both of them vanish (since in that case, the limiting distribution is simply point mass at 0). The choice of scaling will again depend on the ratio $m/n$, but now we have all of the ingredients in place to prove the following theorem:

\begin{theorem}
\label{thm:main}
Let $r,s>0$, $p\in(0,1)$ be constants. Let $X\sim \mathrm{Binomial}(n,p)$, and let $Y\sim\mathrm{Binomial}(m,p)$ be independent of $X$. Define
$$ 
R= \frac{X^r}{(X+Y)^s}.
$$
Then as $n,m\rightarrow\infty$, we have the following convergences in distribution:
\begin{enumerate}[(i)]
\item If $m/n\rightarrow\infty$, $m\log(m)n^{-3/2}\rightarrow 0$, then
$$
\frac{m^r}{n^{s-1/2}}\left(R-\frac{n^s}{(n+m)^r}p^{s-r}\right)\rightarrow \mathcal{N}\left(0,p^{2(s-r)-1}(1-p)s^2\right).
$$
\item If $m/n\rightarrow\alpha\in(0,+\infty)$, then
$$
n^{r-s+1/2}\left(R-\frac{n^s}{(n+m)^r}p^{s-r}\right)\rightarrow \mathcal{N}\left(0,p^{2(s-r)-1}(1-p)\frac{[(s(1+\alpha)-r)^2+\alpha r^2]}{(1+\alpha)^{2(r+1)}} \right).
$$
\item If $m/n\rightarrow0$, then
$$
n^{r-s+1/2}\left(R-\frac{n^s}{(n+m)^r}p^{s-r}\right)\rightarrow \mathcal{N}\left(0,p^{2(s-r)-1}(1-p)(s-r)^2\right).
$$
\end{enumerate}
\end{theorem}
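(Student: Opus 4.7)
The plan is to substitute the random variables $(X,Y)$ into the Taylor expansion (\ref{eq:approx}), multiply through by the case-specific scaling factor appearing in the theorem, and then invoke Slutsky's lemma (Lemma~\ref{lem:slutzky}) to reduce the problem to two independent claims: (a) the rescaled quadratic remainder $Q(X,Y)$ converges to $0$ in probability, and (b) the rescaled linear term converges in distribution to the stated centered Normal. Both claims must be verified in each of the three regimes for $m/n$, and in each case the theorem then follows by combining them via Slutsky.

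For the remainder step, I would restrict attention to the high-probability event $\mathcal{A}$ and apply the bound (\ref{eq:qbound}); the complement of $\mathcal{A}$ contributes at most $Cn^{-2}+Cm^{-2}$ to $\PP[|Q(X,Y)|>\epsilon]$, which is negligible after any polynomial rescaling. In case (ii), substituting $m\asymp n$ into (\ref{eq:qbound}) gives a bound of order $n^{-1}\log n$ up to a constant, and multiplying by the scaling factor $n^{r-s+1/2}$ still yields a quantity that vanishes. Case (iii) is analogous. Case (i) is the delicate one: the hypothesis $m\log(m)n^{-3/2}\to 0$ is used precisely to ensure that multiplying (\ref{eq:qbound}) by $m^r/n^{s-1/2}$ still produces a quantity tending to $0$, and I would carefully track the exponents of $m$, $n$, and $\log(n+m)$ to confirm this.

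For the linear-term step, the classical CLT for Binomials gives joint convergence of $((X-np)/\sqrt{np(1-p)},(Y-mp)/\sqrt{mp(1-p)})$ to a pair of independent standard Normal random variables $(Z_X,Z_Y)$, independence coming from independence of $X$ and $Y$. Substituting into the display preceding the theorem, the scaled linear term becomes a deterministic linear combination of these coordinates with coefficients depending on $n$, $m$, $p$, $r$, $s$. I would then take the limit of each coefficient in the three regimes: in case (ii), both coefficients remain finite and nonzero, producing the variance $p^{2(s-r)-1}(1-p)[(s(1+\alpha)-r)^2+\alpha r^2]/(1+\alpha)^{2(r+1)}$; in case (i), the coefficient of $Z_X$ shrinks to zero relative to that of $Z_Y$, leaving only the $s^2$ term; in case (iii) the roles reverse, and the coefficient of $Z_Y$ disappears while the surviving coefficient of $Z_X$ produces the $(s-r)^2$ term. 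The continuous mapping theorem then gives the claimed one-dimensional Normal limits.

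The main obstacle will be case (i), where the asymmetry between $n$ and $m$ forces careful bookkeeping in both sub-steps. One must verify that the hypothesis $m\log(m)n^{-3/2}\to 0$ is genuinely strong enough to kill the rescaled remainder (it is essentially tight), and, in parallel, that after multiplying by $m^r/n^{s-1/2}$ the coefficient of $Z_X$ truly vanishes rather than contributing to the limiting variance, while the coefficient of $Z_Y$ converges to the advertised constant. The remaining algebraic simplifications leading to the explicit variance expressions are routine if somewhat tedious.
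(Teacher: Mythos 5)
Your overall strategy is exactly the paper's: Taylor-expand around $(np,mp)$, kill the rescaled quadratic remainder via the bound (\ref{eq:qbound}) on the concentration event $\mathcal{A}$, invoke Lemma~\ref{lem:slutzky}, and read off the limiting variance from the rescaled gradient coefficients. Your remainder analysis is right in all three regimes (the paper's written proof consists of precisely these three verifications, e.g.\ $Cn^{s-2}\log(n+m)(n+m)^{1-r}\cdot m^r n^{1/2-s}\sim m\log(m)n^{-3/2}\rightarrow 0$ in case (i)), and your treatment of the linear term in cases (ii) and (iii) is fine.

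However, your description of the linear term in case (i) is backwards, and followed literally it would produce the wrong variance. After scaling by $m^r/n^{s-1/2}$, the coefficient of $Z_X$ satisfies
\begin{align*}
\frac{m^r}{n^{s-1/2}}\cdot\frac{(np)^s}{((n+m)p)^{r+1}}\left(\frac{s(n+m)}{n}-r\right)\sqrt{np(1-p)} &= p^{s-r-1}\sqrt{p(1-p)}\left(\frac{m}{n+m}\right)^{r}\cdot\frac{s(n+m)-rn}{n+m}\\
&\rightarrow s\, p^{s-r-1}\sqrt{p(1-p)},
\end{align*}
while the scaled coefficient of $Z_Y$ equals $-r\,p^{s-r-1}\sqrt{p(1-p)}\,(m/(n+m))^{r}\,\sqrt{nm}/(n+m)\rightarrow 0$. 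So it is the $Z_Y$ contribution that dies and the $Z_X$ contribution that survives and yields the $s^2$: although $Y$ fluctuates on the larger scale $\sqrt{m}\gg\sqrt{n}$, the partial derivative $f_x(np,mp)$ exceeds $|f_y(np,mp)|$ by a factor of order $(n+m)/n$, which more than compensates. Had the $Z_Y$ term been the survivor, as you claim, the limiting variance would carry an $r^2$ rather than an $s^2$ (and in fact, under your reading both coefficients would vanish, giving a degenerate limit). Note also that cases (i) and (iii) are not ``reversed'': in both, it is the $Z_Y$ coefficient that vanishes after scaling. With this bookkeeping corrected, your argument goes through and coincides with the paper's, whose proof only records the three remainder verifications and leaves these coefficient limits implicit in the display preceding the theorem.
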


\begin{proof}
It remains to be shown that in the three cases described above, the bound in (\ref{eq:qbound}) still goes to zero after multiplication by the appropriate scaling factor as $n,m\rightarrow\infty$. 
\begin{enumerate}[(i)]
\item When $m/n\rightarrow\infty$, $m\log(m)n^{-3/2}\rightarrow 0$, then
$$ C \frac{n^{s-2}\log(n+m)}{(n+m)^{r-1}} \frac{m^r}{n^{s-1/2}}\sim \frac{m
\log(m)}{n^{3/2}}\rightarrow 0.$$
\item When $m/n\rightarrow\alpha$, then
$$ C \frac{n^{s-2}\log(n+m)}{(n+m)^{r-1}}n^{r-s+1/2}\sim \frac{\log(n)}{\sqrt{n}}\rightarrow 0.$$
\item When $m/n\rightarrow 0$, then
$$ C \frac{n^{s-2}\log(n+m)}{(n+m)^{r-1}} n^{r-s+1/2} \sim \frac{\log(n)}{\sqrt{n}}\rightarrow 0.$$
\end{enumerate}
\end{proof}

\section{Simulation}

We verify our limiting distribution results for different values of the parameters $n, m, p, r$, and $s$. For each set of parameter values we tested, we generate 100,000 points $(X_i,Y_i)$, where $X_i\sim \mathrm{Binomial}(n,p)$, $Y_i\sim \mathrm{Binomial}(m,p)$, and $X_i,Y_i$ are independent. We then compute $\frac{ X_i^s }{(X_i+Y_i)^r}$, and center and scale according to Theorem~\ref{thm:main}, obtaining $R_i$. We compare this sample with a sample of 100,000 values $Z_j$ drawn from the Normal distribution with mean 0 and the appropriate variance. To measure the accuracy between the distributions of $R$ and $Z$, we use the discrete KL divergence: We divide the values in $\{R_i\}, \{Z_j\}$ into 100 bins $x\in \mathbb{X}$, then set $A(x)$ to be the observed proportion of $R_i\in x$ and $B(x)$ to be the observed proportion of $Z_j\in x$. We compare the two distributions with the formula

\begin{equation}
D_{KL}(A||B) = \sum_{x \in \mathbb{X}} A(x)[\log(A(x))-\log(B(x))].
\end{equation}
When $\frac{m}{n}\rightarrow \infty$, it can happen that all of the observations fall into a single bin, so in this case we use the reversed formula to get a meaningful comparison of the distributions:
\begin{equation}
D_{KL}(B||A) = \sum_{x \in \mathbb{X}} B(x)[\log(B(x))-\log(A(x))].
\end{equation}

We test the distributions in each of the settings of Theorem~\ref{thm:main}, as well as the case where $m/n\rightarrow\infty$, but $m\log(m)/n^{3/2}\not\rightarrow 0$. Whenever we fix the value of $r$ or $s$, it is set equal to 15 (except in Section~\ref{s:m0}, for reasons that we explain there), and whenever we fix the value of $p$, it is set equal to 0.5. For each of the regimes of $m/n$, all plots show the effect of varying one of the parameters while the others remain fixed. 

\subsection{$\frac{m}{n}\rightarrow \infty, m\log(m)n^{-3/2}\not\rightarrow 0$}
\label{s:mi}
While outside the scope of the theorem, for this setting, whenever we fix $m$ or $n$, we choose $m= 2\times 10^9$, $n=2\times 10^5$. Whenever we fix one of $r,s$, we set its value to be $15$, and whenever we fix $p$, we set its value to be 0.5. The results are shown in Figure~\ref{fig1}. We observe that the KL divergence remains concentrated around 3.35 very consistently despite changing values of the parameters. This occurs since the limiting distribution ends up collapsing to point mass at 0, as a result of the denominator growing much faster than the numerator in $$ \frac{X^s}{(X+Y)^r}. $$

\begin{figure}
\centering
\begin{subfigure}{0.32\textwidth}
\includegraphics[width=\linewidth]{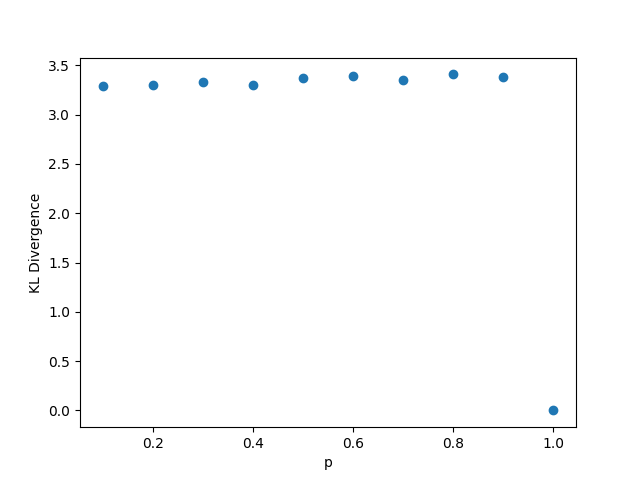}
\caption{Changing $p$ (range 0 to 1)}
\label{sfig:1a}
\end{subfigure}
\begin{subfigure}{0.32\textwidth}
\includegraphics[width=\linewidth]{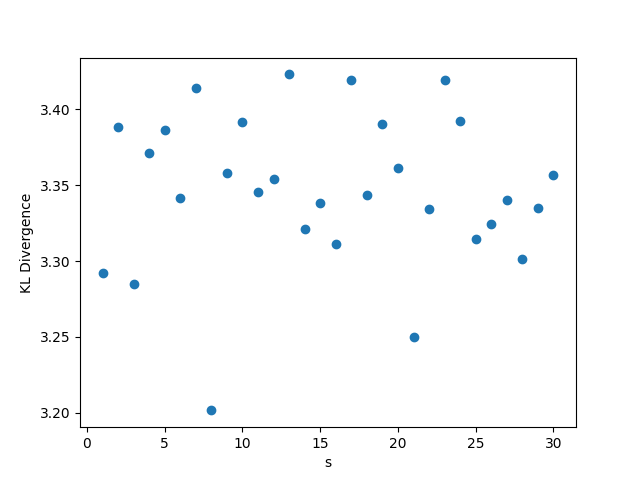}
\caption{Changing $s$ (range 1 to 30)}
\label{sfig:1b}
\end{subfigure}
\begin{subfigure}{0.32\textwidth}
\includegraphics[width=\linewidth]{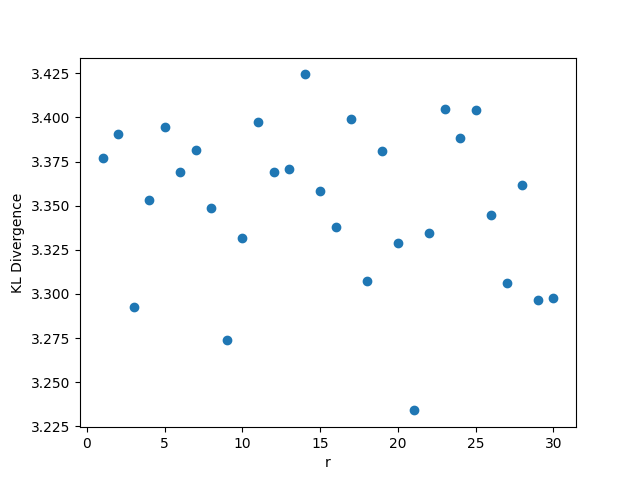}
\caption{Changing $r$ (range 1 to 30)}
\label{sfig:1c}
\end{subfigure}

\begin{subfigure}{0.33\textwidth}
\includegraphics[width=\linewidth]{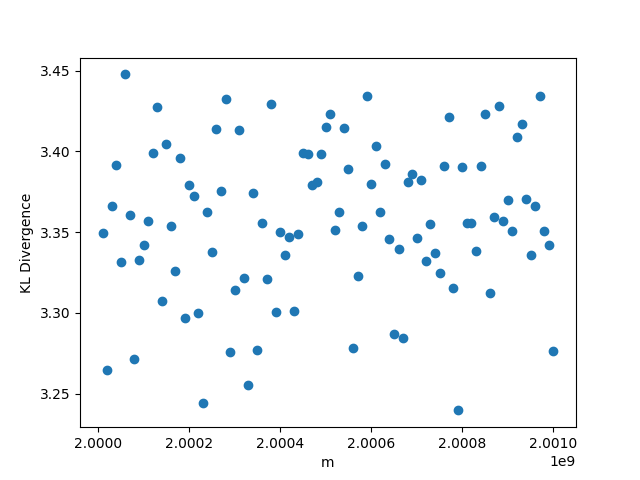}
\caption{Changing $m$ (range $2\times 10^9$ to $2.001\times 10^9$)}
\label{sfig:1d}
\end{subfigure}
\hspace{0.05\textwidth}
\begin{subfigure}{0.33\textwidth}
\includegraphics[width=\linewidth]{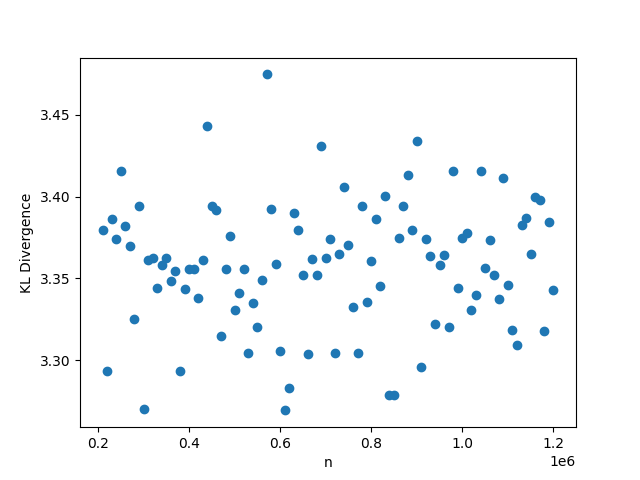}
\caption{Changing $n$ (range $2\times 10^5$ to $1.2\times 10^6$)}
\label{sfig:1e}
\end{subfigure}
\caption{Effect of changing parameters on the KL divergence comparing samples coming from our true distribution with those of the Normal distribution, in the setting where $m/n\rightarrow\infty$ and $m\log(m) n^{-3/2}\not\rightarrow 0$. See Section~\ref{s:mi} for more details.}
\label{fig1}
\end{figure}

\subsection{$m/n\rightarrow\infty, m\log(m)n^{-3/2}\rightarrow 0$}
\label{s:mf}
In this setting, whenever we fix $m$, we choose the value $1.1\times 10^9$, and $n=3.8\times 10^6$. As usual, when we fix the other parameters, we choose $r,s=15$ and $p=0.5$. The results may be seen in Figure~\ref{fig2}. In this setting, the KL divergence is nearly zero regardless of changes in the parameters. The low value of the KL divergence indicates that the simulated and hypothetical distributions are nearly identical, reinforcing Theorem~\ref{thm:main}. We note that when $r$ grows, the distribution of $X^r$ will become increasingly skewed, so $n$ and $m$ may need to be larger to get the same degree of convergence in distribution. While we can see in panel (c) that the KL divergence does increase with $r$, the value is still very small (0.012) even for $r=30$ for these values of $m$ and $n$, however.

\begin{figure}
\centering
\begin{subfigure}{0.32\textwidth}
\includegraphics[width=\linewidth]{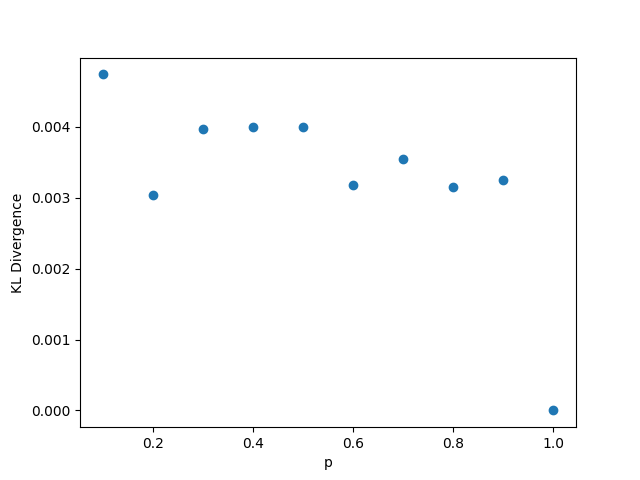}
\caption{Changing $p$ (range 0 to 1)}
\label{sfig:2a}
\end{subfigure}
\begin{subfigure}{0.32\textwidth}
\includegraphics[width=\linewidth]{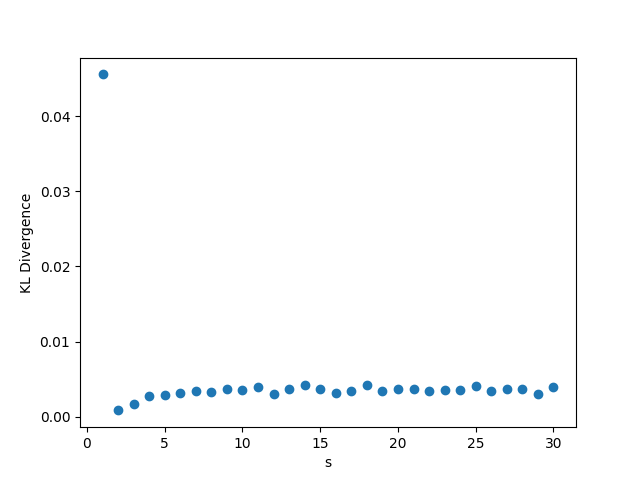}
\caption{Changing $s$ (range 1 to 30)}
\label{sfig:2b}
\end{subfigure}
\begin{subfigure}{0.32\textwidth}
\includegraphics[width=\linewidth]{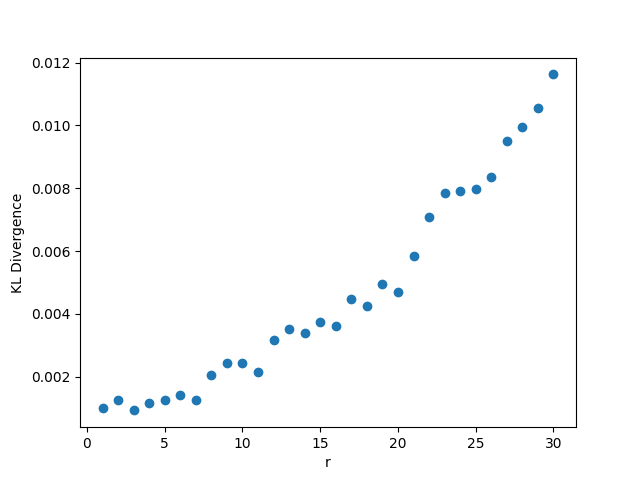}
\caption{Changing $r$ (range 1 to 30)}
\label{sfig:2c}
\end{subfigure}

\begin{subfigure}{0.33\textwidth}
\includegraphics[width=\linewidth]{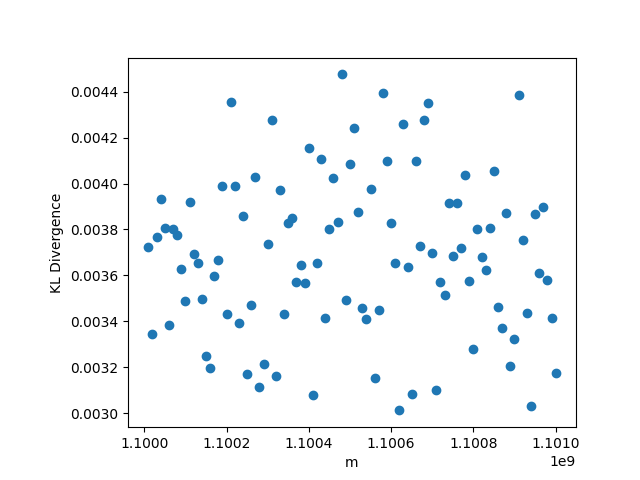}
\caption{Changing $m$ (range $1.1\times 10^9$ to $1.101\times 10^9$)}
\label{sfig:2d}
\end{subfigure}
\hspace{0.05\textwidth}
\begin{subfigure}{0.33\textwidth}
\includegraphics[width=\linewidth]{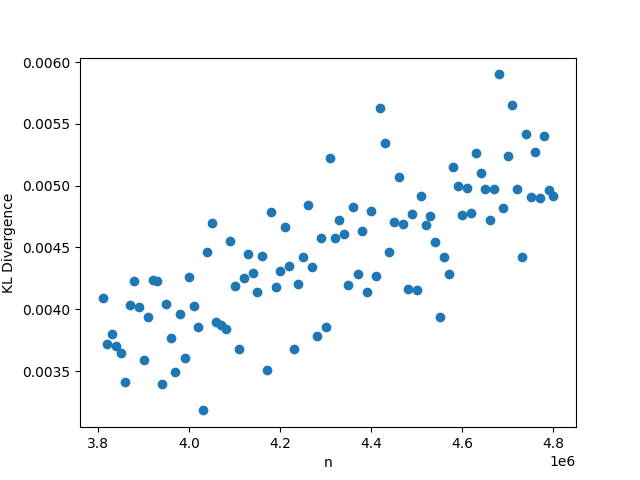}
\caption{Changing $n$ (range $3.8\times 10^6$ to $4.8\times 10^6$)}
\label{sfig:2e}
\end{subfigure}
\caption{Effect of changing parameters on the KL divergence comparing samples coming from our true distribution with those of the Normal distribution, in the setting that $m/n\rightarrow\infty$, $m\log(m) n^{-3/2}\rightarrow 0$. See Section~\ref{s:mf} for details.}
\label{fig2}
\end{figure}

\subsection{$\frac{m}{n}\rightarrow \alpha$}
\label{s:mc}
In this setting, whenever we fix $m$, we choose the value $10^6$, and $n=10^6$. As usual, when we fix the other parameters, we choose $r,s=15$ and $p=0.5$. The results may be seen in Figure~\ref{fig3}. Again in this setting, the KL divergence remains nearly 0 regardless of the values of the parameters, reinforcing Theorem~\ref{thm:main}.

\begin{figure}
\centering
\begin{subfigure}{0.32\textwidth}
\includegraphics[width=\linewidth]{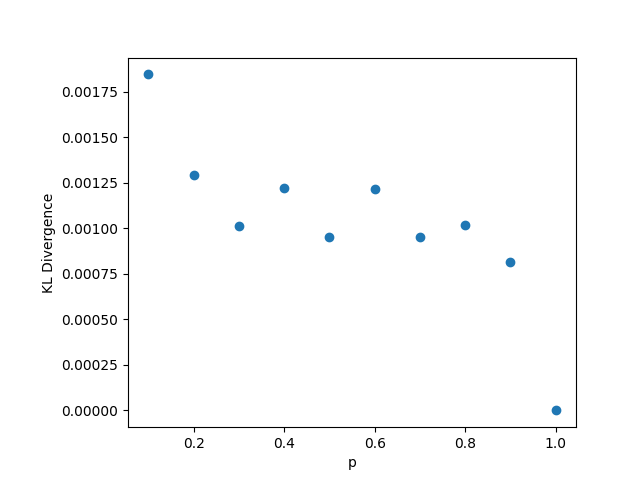}
\caption{Changing $p$ (range 0 to 1)}
\label{sfig:3a}
\end{subfigure}
\begin{subfigure}{0.32\textwidth}
\includegraphics[width=\linewidth]{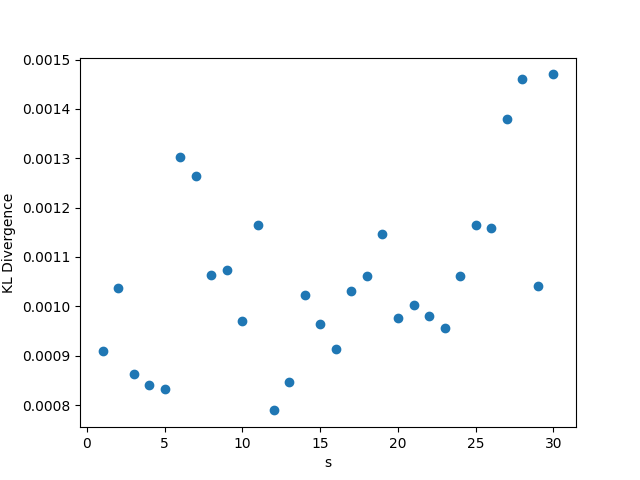}
\caption{Changing $s$ (range 1 to 30)}
\label{sfig:3b}
\end{subfigure}
\begin{subfigure}{0.32\textwidth}
\includegraphics[width=\linewidth]{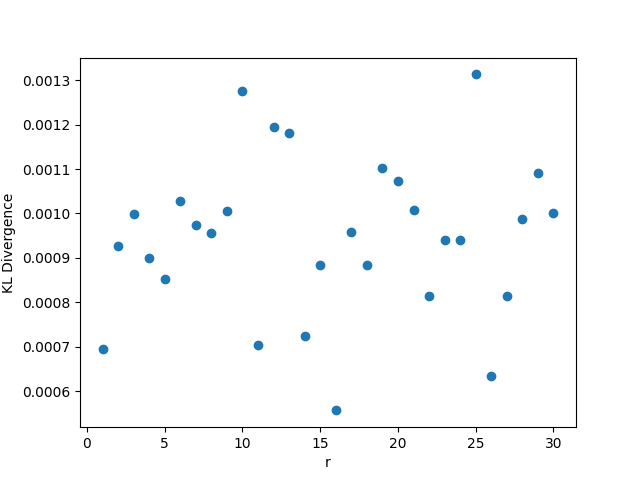}
\caption{Changing $r$ (range 1 to 30)}
\label{sfig:3c}
\end{subfigure}

\begin{subfigure}{0.33\textwidth}
\includegraphics[width=\linewidth]{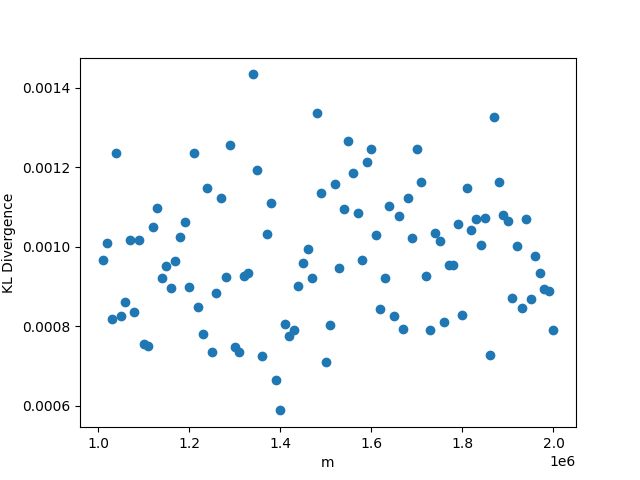}
\caption{Changing $m$ (range $10^6$ to $2\times 10^6$)}
\label{sfig:3d}
\end{subfigure}
\hspace{0.05\textwidth}
\begin{subfigure}{0.33\textwidth}
\includegraphics[width=\linewidth]{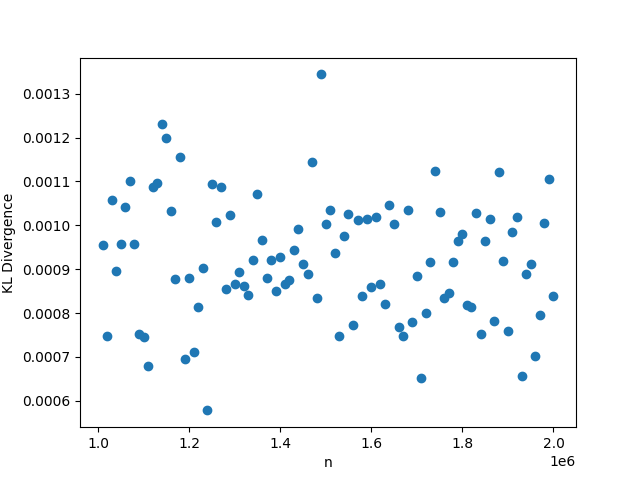}
\caption{Changing $n$ (range $10^6$ to $2\times 10^6$)}
\label{sfig:3e}
\end{subfigure}
\caption{Effect of changing parameters on the KL divergence comparing samples coming from our true distribution with those of the Normal distribution, in the setting where $m/n\rightarrow\alpha\in(0,+\infty)$. See Section~\ref{s:mc} for more details.}
\label{fig3}
\end{figure}

\subsection{$\frac{m}{n}\rightarrow 0$}
\label{s:m0}
In this setting, whenever we fix $m$, we choose the value $3.8\times 10^6$, and $n=1.1\times 10^9$. Unlike in previous cases, when the remaining parameters are fixed, we choose $r=15$ and $s=16$, while $p=0.5$. The results may be seen in Figure~\ref{fig4}. The reason for the change in the default value for $s$ can be explained by considering panels (b) and (c) where $r$ and $s$ vary. We can see that the KL divergence spikes when $r=s$: this comes from the fact that in this case, the hypothesized limiting distribution has 0 variance, so the Normal distribution we are comparing to collapses. Since for any finite $m$ and $n$, the distribution of $R$ is not 0, the KL divergence is much larger at this point. Excluding this case, the KL divergences in these plots are all small, indicating close similarity between the simulated distributions and the proposed hypothetical distributions. We also note in panel (d) that increasing $m$ results in a worse KL divergence, but this also means that the ratio $m/n$ is increasing, meaning that we are further from the limiting regime we are considering in this case. The maximum value of the KL divergence is still quite small for this range of $m$, however.

\begin{figure}
\centering
\begin{subfigure}{0.32\textwidth}
\includegraphics[width=\linewidth]{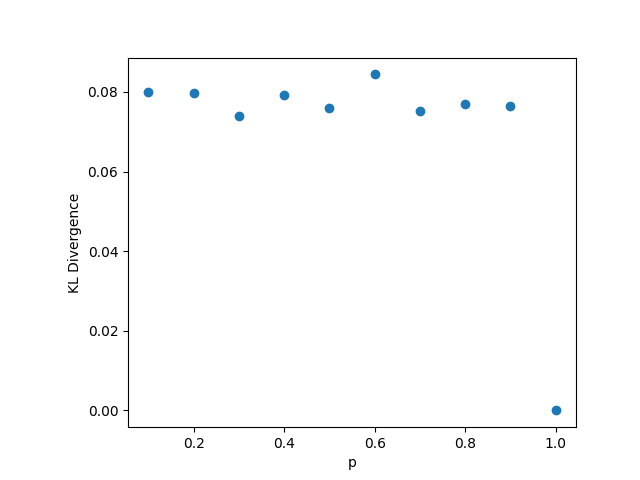}
\caption{Changing $p$ (range 0 to 1)}
\label{sfig:4a}
\end{subfigure}
\begin{subfigure}{0.32\textwidth}
\includegraphics[width=\linewidth]{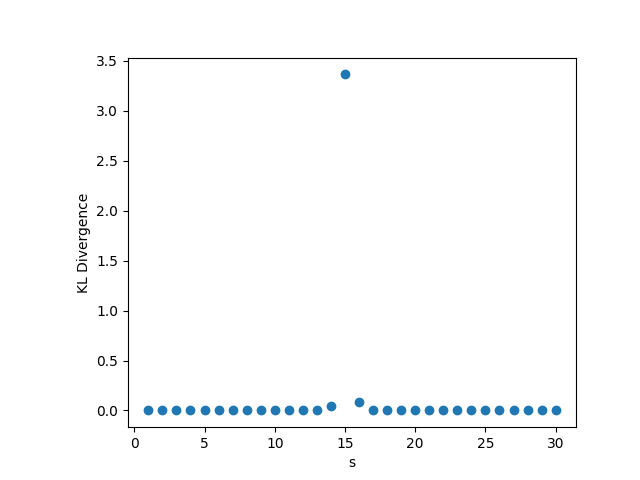}
\caption{Changing $s$ (range 1 to 30)}
\label{sfig:4b}
\end{subfigure}
\begin{subfigure}{0.32\textwidth}
\includegraphics[width=\linewidth]{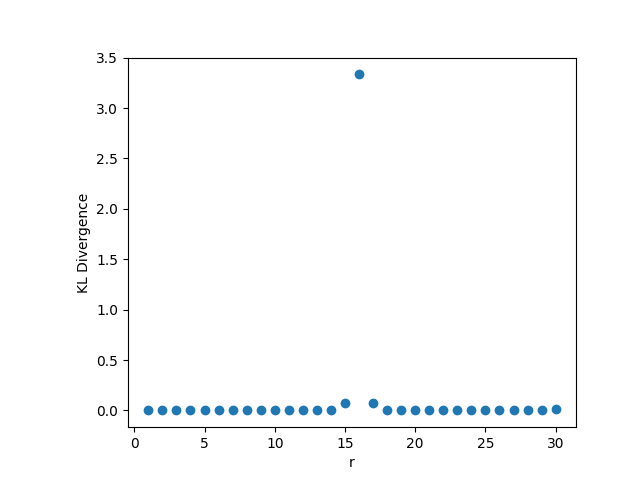}
\caption{Changing $r$ (range 1 to 30)}
\label{sfig:4c}
\end{subfigure}

\begin{subfigure}{0.33\textwidth}
\includegraphics[width=\linewidth]{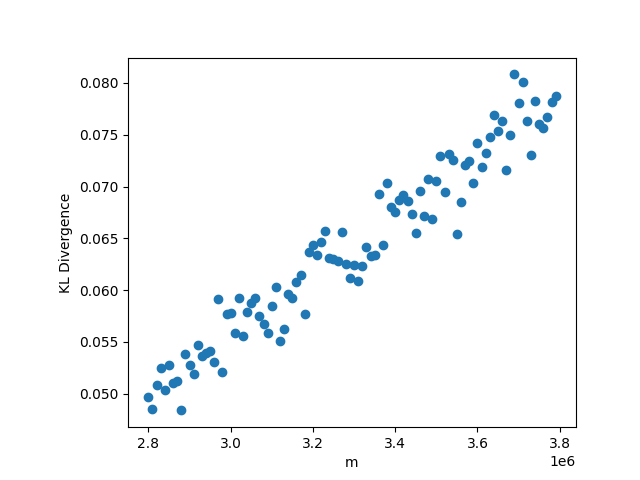}
\caption{Changing $m$ (range $2.8\times 10^6$ to $1.2\times 10^6$)}
\label{sfig:4d}
\end{subfigure}
\hspace{0.05\textwidth}
\begin{subfigure}{0.33\textwidth}
\includegraphics[width=\linewidth]{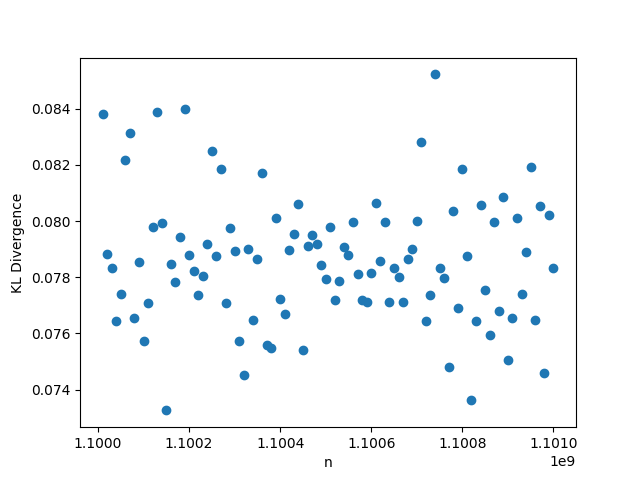}
\caption{Changing $n$ (range $1.1 \times 10^9$ to $1.101 \times 10^9$)}
\label{sfig:4e}
\end{subfigure}
\caption{Effect of changing parameters on the KL divergence comparing samples coming from our true distribution with those of the Normal distribution, in the setting where $m/n\rightarrow0$. Note that unlike in other sections, when we hold the parameter $s$ fixed, we choose the value $s=16$, since the hypothesized limiting Normal distribution has variance 0 when $r=s$ in this setting. See Section~\ref{s:m0} for more details.}
\label{fig4}
\end{figure}

\section{Conclusion}

We studied the limiting distribution of a function of two independent Binomial random variables, in the setting where the number of trials for both variables grows large, but the rates of growth for those two quantities may differ. We were able to show that under several parameter regimes, the limiting distribution after centering and scaling is Normal, with a given variance. However, this does not exhaust the range of possible values for which one could consider this function: For example, in \cite{lubberts2025}, the authors considered the case where $m\sim n^2$ (for the special case of $r=1, s=1/2$), which is not addressed by our results here. While our results determine the appropriate values for the mean and variance of $R$ in the case when $m$ and $n$ are large, determining the behavior of higher moments would require a more careful analysis of the quadratic remainder term than the one undertaken in the present work. As a final remark, we note that even with random variables as well-studied as the Binomial, there still remain many interesting questions to explore.

\bibliographystyle{plain}
\bibliography{references}

\end{document}